\begin{document}

\title*{An asymptotic preserving mixed finite element method for wave propagation in pipelines}
\titlerunning{An asymptotic preserving mixed finite element method for wave propagation}
\author{Herbert Egger and Thomas Kugler}
\institute{Herbert Egger \at Technische Universit\"at Darmstadt, Germany, \email{egger@mathematik.tu-darmstadt.de}
\and Thomas Kugler \at Technische Universit\"at Darmstadt, Germany, \email{kugler@mathematik.tu-darmstadt.de}}
\maketitle

\abstract{We consider a parameter dependent family of damped hyperbolic equations with interesting limit behavior: the system approaches steady states exponentially fast and for parameter to zero the solutions converge to that of a parabolic limit problem. We establish sharp estimates and elaborate their dependence on the model parameters. For the numerical approximation we then consider a mixed finite element method in space together with a Runge-Kutta method in time. Due to the variational and dissipative nature of this approximation, the limit behavior of the infinite dimensional level is inherited almost automatically by the discrete problems. The resulting numerical method thus is asymptotic preserving in the parabolic limit and uniformly exponentially stable. These results are further shown to be independent of the discretization parameters. Numerical tests are presented for a simple model problem which illustrate that the derived estimates are sharp in general.}

\section{Introduction}
\label{sec:1}
Pipeline networks in gas or water supply systems are usually made up of rather long pipes and the time scales of interest are typically large as well. The propagation of pressure waves in such long pipes may then be described by a hyperbolic system
\begin{eqnarray}
\partial_t p^\epsilon + \partial_x m^\epsilon &=& 0  \label{eq:dwe1}\\
\epsilon^2 \partial_t m^\epsilon + \partial_x p^\epsilon + a m^\epsilon &=& 0 \label{eq:dwe2}
\end{eqnarray}
together with appropriate initial and boundary conditions. Here $p^\epsilon$ corresponds to the pressure, $m^\epsilon$ to the momentum or mass flux, and $a$ is a generalized friction coefficient which encodes information about the pipe diameter and roughness. This system can be derived by a parabolic rescaling $t = \tilde t \epsilon^2$, $x = \tilde x \epsilon$ of the physical space and time variables $\tilde x, \tilde t$ from the Euler equations or the shallow water equations under some simplifying assumptions \cite{BrouwerGasserHerty11,Osi96} and $\epsilon$ can be assumed to be small.

The parameter dependent hyperbolic problem (\ref{eq:dwe1})--(\ref{eq:dwe2}) has an interesting limit behavior for long time $t \to \infty$ and in the parabolic limit $\epsilon \to 0$ which has been studied intensively in the literature \cite{BrouwerGasserHerty11,GattiPata06,HsiaoLiu92,LopezGomez97,RauchTaylor74,Zuazua88}. Many interesting results are available even for more general problems including the isentropic Euler equations with damping and rather general hyperbolic systems \cite{DafermosPan09,MarRub00}. 
In this note, we contribute to this active research field by establishing the following theoretical results:
\begin{description}[(R1)]
\item[(R1)] For $\epsilon \to 0$, the solutions $(p^\epsilon,m^\epsilon)$ of (\ref{eq:dwe1})--(\ref{eq:dwe2}) converge to the solution $(p^0,m^0)$ of the corresponding parabolic limit problem and 
\begin{eqnarray} \nonumber
\|p^\epsilon(t) - p^0(t)\|^2 + \int_0^t \|m^\epsilon(s) - m^0(s)\|^2 ds \le C \epsilon^2
\end{eqnarray}
with a constant $C$ that is uniform in $\epsilon$ and independent of time $t\ge 0$. 
 \item[(R2)] Assume that the boundary values are kept constant. Then for any $0 \le \epsilon \le 1$ 
the solutions $(p^\epsilon,m^\epsilon)$ converge to the same steady state $(\bar p,\bar m)$ and 
\begin{eqnarray} \nonumber
 \|p^\epsilon(t) - \bar p\|^2 + \epsilon^2 \|m^\epsilon(t) - \bar m\|^2 \le C e^{- \gamma t}
\end{eqnarray}
with constants $C$ and $\gamma>0$ that are independent of $t \ge 0$ and $\epsilon$. 
\end{description}
Our proofs are based on careful energy estimates that explicitly take into account the dependence on the parameter $\epsilon$. As a consequence, the results not only hold for single pipes but can be extended without much difficulty to pipeline networks.

Due to the many important applications, the systematic approximation of parameter dependent hyperbolic problems 
and, in particular, the preservation of asymptotic stability have been investigated intensively as well \cite{BuetDepresFranck12,DuMaTuBe15,ErvedozaZuazua09,GalHerHurLeRou06,GosseToscani02}.
For the discretization of the model problem (\ref{eq:dwe1})--(\ref{eq:dwe2}) we here consider a mixed finite element method in space combined with an implicit Runge-Kutta time-stepping scheme. The resulting method can be shown to exactly conserve mass and to be slightly dissipative in energy, thus capturing the relevant physical behavior \cite{EggerKugler15}. In this paper, we additionally establish the following properties:
\begin{description}[(R1)]
 \item[(R3)] The scheme is \emph{asymptotic preserving}, i.e., the solutions $(p_{h,\tau}^\epsilon,m_{h,\tau}^\epsilon)$ converge with $\epsilon \to 0$ to the solution $(p_{h,\tau}^0,m_{h,\tau}^0)$ of the parabolic limit problem, and 
\begin{eqnarray} \nonumber
 \|p_{h,\tau}^\epsilon(t) - p_{h,\tau}^0(t)\|^2 + \int_0^t \|m_{h,\tau}^\epsilon(s) - m_{h,\tau}^0(s)\|^2 ds \le C \epsilon^2
\end{eqnarray}
with $C$ independent of $\epsilon$ and of the discretization parameters $h$ and $\tau$. 
\item[(R4)] The method is uniformly exponentially stable, i.e., for constant boundary data the solutions $(p_{h,\tau}^\epsilon,m_{h,\tau}^\epsilon)$ converge towards steady state $(\bar p_{h},\bar m_{h})$ and 
\begin{eqnarray} \nonumber
\|p_{h,\tau}^\epsilon(t) - \bar p_{h}\|^2 + \epsilon^2 \|m_{h,\tau}^2(t) - \bar m_{h}\|^2 \le C e^{-\gamma t} 
\end{eqnarray}
with $C$ and $\gamma > 0$ independent of $\epsilon$ and the discretization parameters $h$, $\tau$.
\end{description}
The numerical method is also \emph{well-balanced} in the sense that it automatically provides a stable approximation $(\bar p_{h},\bar m_{h})$ for the corresponding stationary problem.
Since the proposed discretization strategy is of variational and dissipative nature, the above assertions can be proven with only slight modification of the energy arguments used on the continuous level. 
In summary, we thus obtain uniformly stable and accurate approximations for the parameter dependent problem (\ref{eq:dwe1})--(\ref{eq:dwe2}) that capture all relevant physical and mathematical 
properties of the underlying system.

\medskip 

The remainder of this note is organized as follows: 
In Section~\ref{sec:2}, we prove the assertions (R1) and (R2) for the case of a single pipe. 
Section~\ref{sec:3} is then concerned with the numerical approximation 
and the proof of assertions (R3) and (R4) for a single pipe. 
In Section~\ref{sec:4}, we briefly indicate how the results can be generalized with minor modifications to pipe networks. 
In Section~\ref{sec:5}, we discuss in detail a specific test problem and present numerical results 
that illustrate the sharpness of our estimates and also indicate directions for possible improvements.

\section{Analysis on a single pipe} 
\label{sec:2}

Let us start with describing in more detail the model problem under investigation.
The pipe shall be represented by the unit interval and we consider
\begin{eqnarray}
\partial_t p^\epsilon(x,t) + \partial_x m^\epsilon(x,t) &=& 0, \qquad x \in (0,1), \ t>0  \label{eq:sys1}\\
\epsilon^2 \partial_t m^\epsilon(x,t) + \partial_x p^\epsilon(x,t) + a(x) m^\epsilon(x,t) &=& 0, \qquad x \in (0,1), \ t>0. \label{eq:sys2}
\end{eqnarray}
We assume that $0 < \underline a \le a(x) \le \overline a$ and that the pressure at the boundary is given by 
\begin{eqnarray}
p^\epsilon(0,t) = g_0, \qquad p^\epsilon(x,t) &=& g_1, \qquad x \in \{0,1\}, \ t>0. \label{eq:sys3}
\end{eqnarray}
For ease of presentation $g_0$, $g_1$ are assumed to be independent of time here. Other boundary conditions could be considered with obvious modifications. 
From standard results of semigroup theory, one can easily deduce the following.

\begin{lemma} \label{lem:1}
Let $p_0,m_0 \in H^1(0,1)$ be given with $p_0(0)=g_0$ and $p_1(1)=g_1$. 
Then for any $\epsilon>0$ problem (\ref{eq:sys1})--(\ref{eq:sys3}) has a unique classical solution 
\begin{eqnarray}  \nonumber
(p,m) \in C^1(\mathbb{R}^+;L^2(0,1) \times L^2(0,1)) \times C(\mathbb{R}^+;H^1(0,1) \times H^1(0,1)) 
\end{eqnarray}
satisfying initial conditions $p^\epsilon(x,0) = p_0(x)$ and $m^\epsilon(x,0)=m_0(x)$ for all $x \in (0,1)$. 

\noindent
The parabolic problem (\ref{eq:sys1})--(\ref{eq:sys3}) with $\epsilon=0$ 
also has a unique solution 
\begin{eqnarray} \nonumber
p^0 \in C^1(\mathbb{R}^+;L^2(0,1)) \times C(\mathbb{R}^+;H^1(0,1)),
\quad m^0 \in C(\mathbb{R}^+;L^2(0,1))
\end{eqnarray}
satisfying the initial condition $p^0(x,0)=p_0(x)$ for all $x \in (0,1)$. 
\end{lemma}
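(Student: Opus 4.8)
The plan is to establish existence, uniqueness and the stated regularity by reformulating the problem as an abstract Cauchy problem and invoking the Lumer--Phillips theorem. First I would introduce homogeneous data by subtracting a suitable lifting: choose the affine function $p^{\mathrm{lift}}(x) = g_0 + (g_1-g_0)x$ and the corresponding $m^{\mathrm{lift}}$ so that the pair solves the stationary equations, and set $\tilde p = p^\epsilon - p^{\mathrm{lift}}$, $\tilde m = m^\epsilon - m^{\mathrm{lift}}$; then $(\tilde p,\tilde m)$ satisfies the same system (possibly with a time-independent right-hand side coming from the lifting, which is harmless) together with \emph{homogeneous} Dirichlet conditions $\tilde p(0,t) = \tilde p(1,t) = 0$.

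Next I would set up the functional-analytic framework. On the Hilbert space $X_\epsilon = L^2(0,1) \times L^2(0,1)$ equipped with the energy inner product $\langle (p,m),(q,n)\rangle_\epsilon = (p,q) + \epsilon^2 (m,n)$, define the operator $A_\epsilon(p,m) = (-\partial_x m, -\tfrac{1}{\epsilon^2}\partial_x p - \tfrac{a}{\epsilon^2} m)$ with domain $D(A_\epsilon) = \{(p,m) : m \in H^1(0,1),\ p \in H^1_0(0,1)\}$, i.e. the pressure carries the homogeneous Dirichlet condition after lifting. A direct integration by parts shows $\langle A_\epsilon(p,m),(p,m)\rangle_\epsilon = -\int_0^1 a\,m^2\,dx \le 0$, so $A_\epsilon$ is dissipative; the boundary terms cancel precisely because $p$ vanishes at both endpoints. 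To get a $C_0$-semigroup of contractions I would verify that $A_\epsilon - \lambda$ is surjective for some (hence all) $\lambda>0$, which amounts to solving a coercive elliptic problem for $p$ in $H^1_0(0,1)$ after eliminating $m$ algebraically from the second equation; this is where the bulk of the (routine) work sits. Lumer--Phillips then yields a unique mild solution, and since $(p_0,m_0) \in D(A_\epsilon)$ under the stated hypotheses (note $p_0(0)=g_0$, $p_1(1)=g_1$ mean the lifted initial data lie in the domain), the solution is classical with the regularity $(p,m) \in C^1(\mathbb{R}^+;X_\epsilon) \cap C(\mathbb{R}^+;D(A_\epsilon))$, which is exactly the claimed statement after undoing the lifting.

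For the parabolic case $\epsilon = 0$ the structure degenerates: the second equation becomes the algebraic-differential relation $m^0 = -\tfrac{1}{a}\partial_x p^0$, and substituting into the first gives the scalar parabolic equation $\partial_t p^0 - \partial_x(\tfrac{1}{a}\partial_x p^0) = 0$ with Dirichlet data. Here I would invoke the standard analytic-semigroup theory for the uniformly elliptic operator $L p = -\partial_x(\tfrac{1}{a}\partial_x p)$ on $L^2(0,1)$ with $D(L) = H^2 \cap H^1_0$; since only $p_0 \in H^1$ is assumed (which lies in the form domain but not necessarily in $D(L)$), the parabolic smoothing gives $p^0 \in C^1((0,\infty);L^2) \cap C([0,\infty);H^1)$ and $m^0 \in C((0,\infty);L^2)$, matching the asserted regularity, with continuity up to $t=0$ in the $H^1$-norm following from $p_0 \in H^1$.

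The main obstacle is not any single estimate but being careful that the lifting is compatible with all the boundary and initial conditions simultaneously, and that the operator domains are chosen so that the dissipativity computation produces no leftover boundary terms — in particular, that $p$ (not $m$) carries the Dirichlet constraint, which is the natural choice given that (\ref{eq:sys3}) prescribes $p$ at both ends. A secondary point worth stating cleanly is the uniformity of the semigroup framework in $\epsilon>0$: the estimates degenerate as $\epsilon \to 0$ at the level of the generator (the factor $1/\epsilon^2$), so one should \emph{not} try to pass to the limit here — that is deferred to the energy estimates proving (R1) — and instead treat $\epsilon=0$ by the separate parabolic argument above.
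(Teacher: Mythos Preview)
Your proposal is correct and follows exactly the route the paper indicates: the paper does not actually give a proof of Lemma~\ref{lem:1} but simply states that it ``can easily [be] deduce[d]'' from ``standard results of semigroup theory,'' and your Lumer--Phillips argument for $\epsilon>0$ together with the analytic-semigroup treatment of the scalar parabolic equation for $\epsilon=0$ is precisely that standard route. Your lifting is also consistent with how the paper reduces to homogeneous data elsewhere (cf.\ the proof of Lemma~\ref{lem:2}, where subtracting the stationary solution plays the same role).
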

Note that only one single initial condition is required in the parabolic limit.
By elementary arguments one can verify that the corresponding stationary problem 
\begin{eqnarray} 
\partial_x \bar m(x) &=& 0, \qquad x \in (0,1) \label{eq:sys4}\\
\partial_x \bar p(x) + a(x) \bar m(x) &=& 0, \qquad x \in (0,1) \label{eq:sys5}\\
\bar p(0) = g_0, \quad \bar p(1) &=& g_1 \label{eq:sys6}
\end{eqnarray}
is independent of $\epsilon$ and has a unique solution $(\bar p,\bar m) \in H^1(0,1) \times H^1(0,1)$ as well. 
Using standard energy arguments and the linearity of the time dependent and of the stationary problem, one can then establish the following assertions.
\begin{lemma} \label{lem:2}
Let $(p^\epsilon,m^\epsilon)$ and $(\bar p,\bar m)$ denote solutions of (\ref{eq:sys1})--(\ref{eq:sys3}) and (\ref{eq:sys4})--(\ref{eq:sys6}), respectively. 
Then for any $\epsilon \ge 0$ and any $t \ge 0$, there holds
\begin{eqnarray} 
\|p^\epsilon(t) - \bar p\|^2 + \epsilon^2 \|m^\epsilon(t)-\bar m\|^2 + 2 \int_0^t \underline a \|m^\epsilon(s)-\bar m\|^2 ds \nonumber \\
\le \|p_0-\bar p\|^2 + \epsilon^2 \|m_0-\bar m\|^2.  \nonumber
\end{eqnarray}
For $\epsilon>0$, one can additionally bound the time derivatives of $(p^\epsilon,m^\epsilon)$ by 
\begin{eqnarray} \nonumber 
\|\partial_t p^\epsilon(t)\|^2 + \epsilon^2 \|\partial_t m^\epsilon(t)\|^2 + 2 \int_0^t \underline a \|\partial_t m^\epsilon(s)\|^2 ds \\
\le \|\partial_x m_0\|^2 + \frac{1}{\epsilon^2}\|\partial_x p_0 + a m_0\|^2. \nonumber 
\end{eqnarray}
\end{lemma}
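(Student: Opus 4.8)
For the first estimate the plan is to exploit linearity and reduce everything to a homogeneous problem. I would set $\tilde p := p^\epsilon - \bar p$ and $\tilde m := m^\epsilon - \bar m$; subtracting the stationary equations (\ref{eq:sys4})--(\ref{eq:sys6}) from (\ref{eq:sys1})--(\ref{eq:sys3}) and using that $\bar p,\bar m$ do not depend on $t$ shows that $(\tilde p,\tilde m)$ satisfies the very same system (\ref{eq:sys1})--(\ref{eq:sys2}), now with the homogeneous boundary values $\tilde p(0,t)=\tilde p(1,t)=0$. Testing (\ref{eq:sys1}) with $\tilde p$ and (\ref{eq:sys2}) with $\tilde m$, integrating over $(0,1)$ and adding the two relations, the two first-order terms combine under integration by parts to the boundary contribution $[\tilde p\,\tilde m]_0^1$, which vanishes because $\tilde p=0$ at $x\in\{0,1\}$. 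What remains is the energy identity
\[
\frac{1}{2}\,\frac{d}{dt}\Bigl(\|\tilde p(t)\|^2 + \epsilon^2\|\tilde m(t)\|^2\Bigr) + \int_0^1 a(x)\,\tilde m(x,t)^2\,dx = 0 .
\]
Using $a(x)\ge\underline a$, integrating in time from $0$ to $t$ and multiplying by $2$ then gives precisely the first claimed inequality. The argument goes through verbatim for $\epsilon=0$, where the $\epsilon^2$-terms simply disappear and, consistently with Lemma~\ref{lem:1}, no initial datum for $m$ enters.

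For the second estimate I would differentiate the system in time. Since the coefficient $a$ and the boundary data $g_0,g_1$ are time-independent, the pair $(\partial_t p^\epsilon,\partial_t m^\epsilon)$ solves (\ref{eq:sys1})--(\ref{eq:sys2}) again, this time with homogeneous boundary condition $\partial_t p^\epsilon=0$ at $x\in\{0,1\}$. Hence the energy identity just established applies to it (with $\bar p=\bar m=0$) and yields
\[
\|\partial_t p^\epsilon(t)\|^2 + \epsilon^2\|\partial_t m^\epsilon(t)\|^2 + 2\int_0^t \underline a\,\|\partial_t m^\epsilon(s)\|^2\,ds \le \|\partial_t p^\epsilon(0)\|^2 + \epsilon^2\|\partial_t m^\epsilon(0)\|^2 .
\]
It then only remains to evaluate the initial time derivatives from the equations at $t=0$: (\ref{eq:sys1}) gives $\partial_t p^\epsilon(0)=-\partial_x m_0$, and (\ref{eq:sys2}) gives $\epsilon^2\partial_t m^\epsilon(0)=-(\partial_x p_0 + a\,m_0)$, so that $\epsilon^2\|\partial_t m^\epsilon(0)\|^2 = \epsilon^{-2}\|\partial_x p_0 + a\,m_0\|^2$. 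Substituting these two expressions into the right-hand side reproduces exactly the stated bound; the weight $\epsilon^{-2}$ is, of course, the reason this part is restricted to $\epsilon>0$.

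The one step that genuinely needs care is the justification that $(\partial_t p^\epsilon,\partial_t m^\epsilon)$ is itself a solution regular enough for the energy computation, in particular that the integration by parts is legitimate. By semigroup theory this is automatic when the initial data lie in the domain of a suitable power of the generator; for merely $H^1$-data with the compatibility conditions of Lemma~\ref{lem:1} one instead proves both estimates first for smooth, compatible data --- where every manipulation above is classical --- and then passes to the limit by density, using that all quantities on both sides depend continuously on the $H^1\times H^1$ norm of $(p_0,m_0)$. Apart from this regularity bookkeeping the proof is a routine energy estimate; the only point worth highlighting is that the $\epsilon$-dependent weights are carried along unchanged throughout, which is exactly what makes all constants uniform in $\epsilon$.
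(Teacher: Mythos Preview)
Your proof is correct and follows essentially the same route as the paper: reduce to a homogeneous problem by subtracting the stationary solution, derive the energy identity via integration by parts with vanishing boundary terms, and for the second estimate differentiate in time, apply the first estimate to $(\partial_t p^\epsilon,\partial_t m^\epsilon)$, and read off the initial time derivatives from the equations. The paper likewise invokes a density argument to justify the time-differentiation step for merely $H^1$ data.
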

Here and below, $\|\cdot\|$ and $(\cdot,\cdot)$ denote the norm and the scalar product on $L^2(0,1)$. In addition, the functions $p^\epsilon$, $m^\epsilon$ are understood as functions of time with values in Hilbert spaces.
The fact that the second estimate degenerates as $\epsilon \to 0$ resembles the fact that the second initial condition becomes superfluous in the parabolic limit. 
\begin{proof} \smartqed
Due to linearity of the problem, we may assume without loss of generality that $g_0=g_1=0$ and hence $\bar p \equiv \bar m \equiv 0$. From (\ref{eq:sys1})--(\ref{eq:sys2}) we then get 
\begin{eqnarray}
\frac{1}{2} \frac{d}{dt} \|p^\epsilon\|^2 &+&  \frac{\epsilon^2}{2} \frac{d}{dt} \|m^\epsilon\|^2  \nonumber \\
&=& (\partial_t p^\epsilon, p^\epsilon ) + \epsilon^2 (\partial_t m^\epsilon, m^\epsilon) \nonumber \\
&=& -(\partial_x m^\epsilon, p^\epsilon) - (\partial_x p^\epsilon, m^\epsilon) - (a m^\epsilon, m^\epsilon) \nonumber.
\end{eqnarray}
Using integration-by-parts for the second term in the last line, the homogeneous boundary conditions for $p^\epsilon$, and the lower bound for the parameter $a$, we get 
\begin{eqnarray} \nonumber
\frac{d}{dt} \|p^\epsilon\|^2 +  \epsilon^2 \frac{d}{dt} \|m^\epsilon\|^2
\le -2 \underline a \|m^\epsilon\|^2.
\end{eqnarray}
The first estimate now follows by integration with respect to time.
Next assume that $(p^\epsilon,m^\epsilon) \in C^2(\mathbb{R}^+;L^2(0,1) \times L^2(0,1))$. Then by formal differentiation of the problem one can see, that the time derivative
$(\partial_t p^\epsilon, \partial_t m^\epsilon)$ also solves (\ref{eq:sys1})--(\ref{eq:sys3}) 
with homogeneous boundary conditions. The previous estimate thus yields 
\begin{eqnarray} \nonumber
\|\partial_t p^\epsilon(t)\|^2 + \epsilon^2 \|\partial_t m^\epsilon(t)\|^2 + 2\int_0^t \underline a \|\partial_t m^\epsilon(t)\|^2 \\
\le  \|\partial_t p^\epsilon(0)\|^2 + \epsilon^2 \|\partial_t m^\epsilon(0)\|^2. \nonumber
\end{eqnarray}
The differential equations (\ref{eq:sys1}) and (\ref{eq:sys2}) can be used to replace the terms on the right hand side which proves the second estimate for the case of smooth solutions. 
The general case finally follows by a density argument. \qed
\end{proof}

A combination of these energy estimates allows us to provide a precise formulation and to prove the first assertion about solutions of the continuous problem.
\begin{theorem} \label{thm:r1}
Let $\epsilon>0$ and let $(p^\epsilon,m^\epsilon)$ and $(p^0,m^0)$ denote the unique solutions of problem (\ref{eq:sys1})--(\ref{eq:sys3}) with initial values $p^\epsilon(0)=p^0(0)=p^0$ and $m^\epsilon(0)=m_0$. Then 
\begin{eqnarray}
\|p^\epsilon(t) - p^0(t)\|^2 + \int_0^t \underline a \|m^\epsilon(s) - m^0(s)\|^2 ds \nonumber \\
\le \frac{\epsilon^4}{2{\underline a}^2} (\|\partial_x m_0\|^2 + \frac{1}{\epsilon^2} \|\partial_x p_0 + a m_0\|^2). \nonumber
\end{eqnarray}
\end{theorem}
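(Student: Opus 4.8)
The plan is to derive an evolution equation for the difference $(e_p, e_m) := (p^\epsilon - p^0, m^\epsilon - m^0)$ and apply the first energy estimate of Lemma~\ref{lem:2} to it, treating the $\epsilon^2 \partial_t m^\epsilon$ term as an external forcing. Subtracting the limit system ($\epsilon = 0$) from the system (\ref{eq:sys1})--(\ref{eq:sys2}), we obtain
\begin{eqnarray}
\partial_t e_p + \partial_x e_m &=& 0, \nonumber \\
\partial_x e_p + a\, e_m &=& -\epsilon^2 \partial_t m^\epsilon, \nonumber
\end{eqnarray}
with homogeneous boundary data $e_p(0,t) = e_p(1,t) = 0$ and homogeneous initial data $e_p(\cdot,0) = 0$ (here I use that $p^\epsilon(0) = p^0(0) = p_0$). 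Note that $e_m$ has no initial condition and no $\partial_t e_m$ term appears — the difference system is itself of ``parabolic type''.

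Next I would run the standard energy argument: test the first equation with $e_p$, the second with $e_m$, add, and integrate by parts in the cross terms using the homogeneous boundary conditions, exactly as in the proof of Lemma~\ref{lem:2}. This yields
\begin{eqnarray}
\frac{1}{2}\frac{d}{dt}\|e_p\|^2 + (a\, e_m, e_m) = -\epsilon^2 (\partial_t m^\epsilon, e_m). \nonumber
\end{eqnarray}
Using $(a\, e_m, e_m) \ge \underline a \|e_m\|^2$ on the left and Young's inequality $\epsilon^2 (\partial_t m^\epsilon, e_m) \le \frac{\underline a}{2}\|e_m\|^2 + \frac{\epsilon^4}{2\underline a}\|\partial_t m^\epsilon\|^2$ on the right, I absorb half of the dissipation term and obtain
\begin{eqnarray}
\frac{d}{dt}\|e_p\|^2 + \underline a \|e_m\|^2 \le \frac{\epsilon^4}{\underline a}\|\partial_t m^\epsilon\|^2. \nonumber
\end{eqnarray}
Integrating in time from $0$ to $t$ and using $e_p(0) = 0$ gives
\begin{eqnarray}
\|e_p(t)\|^2 + \int_0^t \underline a \|e_m(s)\|^2\, ds \le \frac{\epsilon^4}{\underline a}\int_0^t \|\partial_t m^\epsilon(s)\|^2\, ds. \nonumber
\end{eqnarray}
Finally I would invoke the second estimate of Lemma~\ref{lem:2}, which gives $2\int_0^t \underline a \|\partial_t m^\epsilon(s)\|^2\, ds \le \|\partial_x m_0\|^2 + \frac{1}{\epsilon^2}\|\partial_x p_0 + a m_0\|^2$, i.e. $\int_0^t \|\partial_t m^\epsilon(s)\|^2\, ds \le \frac{1}{2\underline a}(\|\partial_x m_0\|^2 + \frac{1}{\epsilon^2}\|\partial_x p_0 + a m_0\|^2)$. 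Substituting this into the previous inequality yields exactly the claimed bound with the constant $\frac{\epsilon^4}{2\underline a^2}$.

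The only real subtlety — and the point I would be careful about — is the regularity needed to justify the manipulations: the forcing $\epsilon^2 \partial_t m^\epsilon$ must be an $L^2$-in-space, locally-$L^2$-in-time function, which is precisely what the second estimate of Lemma~\ref{lem:2} guarantees (for $\epsilon > 0$), and the difference $(e_p, e_m)$ must be regular enough in time for the energy identity to hold; this is handled, as in Lemma~\ref{lem:2}, by first assuming smooth solutions and then passing to the limit by density. The algebra is otherwise routine, and crucially the factor $1/\epsilon^2$ in the second Lemma~\ref{lem:2} estimate is exactly compensated by one factor of $\epsilon^2$ coming from the forcing, so the $\epsilon^4$ collapses to the uniform $O(\epsilon^2)$ bound advertised in (R1) once one observes $\frac{1}{\epsilon^2}\|\partial_x p_0 + a m_0\|^2 \cdot \epsilon^4 = \epsilon^2 \|\partial_x p_0 + a m_0\|^2$.
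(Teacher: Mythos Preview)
Your proof is correct and follows essentially the same approach as the paper: derive the parabolic-type difference system with forcing $-\epsilon^2 \partial_t m^\epsilon$, run the energy argument with Young's inequality to absorb half the dissipation, integrate in time using $e_p(0)=0$, and close with the second estimate of Lemma~\ref{lem:2}. The only differences are notational (the paper writes $r^\epsilon, w^\epsilon$ instead of $e_p, e_m$) and your added remark on regularity and density, which the paper leaves implicit.
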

\begin{proof} \smartqed
Let $r^\epsilon = p^\epsilon - p^0$ and $w^\epsilon = m^\epsilon - m^0$ denote the differences between the solutions of the hyperbolic and the parabolic problem. Then by linearity of the equations, one can deduce that $r^\epsilon=0$ at the boundary and that
\begin{eqnarray}
\partial_t r^\epsilon + \partial_x w^\epsilon &=& 0, \nonumber \\
                        \partial_x r^\epsilon + a w^\epsilon &=& - \epsilon^2 \partial_t m^\epsilon. \nonumber
\end{eqnarray}
Applying similar arguments as in the proof of the previous lemma then leads to
\begin{eqnarray}
\frac{1}{2} \frac{d}{dt}\|r^\epsilon(t)\|^2 + \underline a \|w^\epsilon(t)\|^2 
&\le& \epsilon^2 \|\partial_t m^\epsilon(t)\| \|w^\epsilon(t)\| \nonumber \\
&\le& \frac{\epsilon^4}{2 \underline a} \|\partial_t m^\epsilon(t)\|^2 + \frac{\underline a}{2} \|w^\epsilon(t)\|^2. \nonumber
\end{eqnarray}
Multiplication by two and integration with respect to time further yields
\begin{eqnarray} \nonumber
\|r^\epsilon(t)\|^2 + \int_0^t \underline a \|w^\epsilon(s)\|^2 ds 
\le \|r^\epsilon(0)\|^2 + \frac{\epsilon^4}{\underline a} \int_0^t \|\partial_t m^\epsilon(s)\|^2 ds. 
\end{eqnarray}
Since $p^\epsilon$ and $p^0$ satisfy the same initial conditions, we have $r^\epsilon(0)=0$,
and the remaining integral on the right hand side can be estimated by Lemma~\ref{lem:2}. \qed
\end{proof}

The estimates of Lemma~\ref{lem:2} provide uniform bounds for the distance to steady state.
A refined analysis reveals that in fact exponential convergence takes place.
\begin{theorem} \label{thm:r2}
Let $(p^\epsilon,m^\epsilon)$ denote a solution of (\ref{eq:sys1})--(\ref{eq:sys3}) for some $0 \le \epsilon \le 1$. Further let $(\bar p,\bar m)$ be the unique solution of the corresponding stationary problem. Then 
\begin{eqnarray} \nonumber
\|p^\epsilon(t) - \bar p\|^2 + \epsilon^2 \|m^\epsilon(t)-\bar m\|^2 \le C e^{-\gamma (t-s)}  ( \|p^\epsilon(s) - \bar p\|^2 + \epsilon^2 \|m^\epsilon(s)-\bar m\|^2)
\end{eqnarray}
which holds for all $0 \le s \le t$ and with some constants $C,\gamma > 0$ independent of $\epsilon$.
\end{theorem}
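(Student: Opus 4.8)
The plan is to pass to homogeneous boundary data and then to build a Lyapunov functional that is equivalent to the natural energy but dissipates at an exponential rate independent of $\epsilon$. By linearity, and since the stationary problem does not depend on $\epsilon$, the differences $r^\epsilon=p^\epsilon-\bar p$, $w^\epsilon=m^\epsilon-\bar m$ solve (\ref{eq:sys1})--(\ref{eq:sys2}) with $r^\epsilon(0,t)=r^\epsilon(1,t)=0$, and since the system is autonomous it suffices to prove the estimate for $s=0$, the general case then following by translation in time. The case $\epsilon=0$ I would treat separately and directly: there $w^0=-\frac1a\partial_x r^0$ and $r^0$ solves the heat-type equation $\partial_t r^0=\partial_x(\frac1a\partial_x r^0)$ with homogeneous Dirichlet conditions, so that $\frac{d}{dt}\|r^0\|^2=-2\int_0^1\frac1a|\partial_x r^0|^2\,dx\le-\frac{2}{\overline a}\|\partial_x r^0\|^2\le-\frac{2\pi^2}{\overline a}\|r^0\|^2$ by Poincar\'e's inequality. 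Hence assume $0<\epsilon\le1$. As in the proof of Lemma~\ref{lem:2}, the natural energy $E^\epsilon(t):=\|r^\epsilon(t)\|^2+\epsilon^2\|w^\epsilon(t)\|^2$ satisfies $\frac{d}{dt}E^\epsilon=-2(aw^\epsilon,w^\epsilon)\le-2\underline a\|w^\epsilon\|^2$; what is missing for exponential decay is a dissipative term that also controls $\|r^\epsilon(t)\|^2$.

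To produce such a term I would exploit the elliptic relation $\partial_x r^\epsilon=-aw^\epsilon-\epsilon^2\partial_t w^\epsilon$ hidden in the second equation. With $\zeta^\epsilon(x,t)=\int_x^1 r^\epsilon(y,t)\,dy$ (so that $\partial_x\zeta^\epsilon=-r^\epsilon$, $\zeta^\epsilon(1,t)=0$ and $\|\zeta^\epsilon\|\le\frac{1}{\sqrt2}\|r^\epsilon\|$) and $\langle w^\epsilon\rangle(t)=\int_0^1 w^\epsilon(x,t)\,dx$, I consider, for a small parameter $\delta>0$ to be chosen,
\begin{eqnarray}\nonumber
\mathcal{L}^\epsilon(t)=\|r^\epsilon(t)\|^2+\epsilon^2\|w^\epsilon(t)\|^2+2\delta\,\epsilon^2\bigl(w^\epsilon(t)-\langle w^\epsilon\rangle(t),\,\zeta^\epsilon(t)\bigr).
\end{eqnarray}
Differentiating the cross term, using the two equations together with $\epsilon^2\frac{d}{dt}\langle w^\epsilon\rangle=-\int_0^1 aw^\epsilon\,dx$ (integrate the second equation and use $r^\epsilon(0)=r^\epsilon(1)=0$), and integrating by parts in $x$, one should arrive at
\begin{eqnarray}\nonumber
\epsilon^2\frac{d}{dt}\bigl(w^\epsilon-\langle w^\epsilon\rangle,\zeta^\epsilon\bigr)
&=&-\|r^\epsilon\|^2+\epsilon^2\|w^\epsilon\|^2-\epsilon^2\langle w^\epsilon\rangle^2 \\ \nonumber
&&-(aw^\epsilon,\zeta^\epsilon)+\Bigl(\textstyle\int_0^1 aw^\epsilon\,dx\Bigr)\Bigl(\textstyle\int_0^1 x\,r^\epsilon\,dx\Bigr).
\end{eqnarray}
Here $-\epsilon^2\langle w^\epsilon\rangle^2$ is non-positive, $\epsilon^2\|w^\epsilon\|^2\le\|w^\epsilon\|^2$, and the last two terms are bounded by $\frac14\|r^\epsilon\|^2+C\|w^\epsilon\|^2$ via Young's inequality with $C$ depending only on $\overline a$; hence $\epsilon^2\frac{d}{dt}(w^\epsilon-\langle w^\epsilon\rangle,\zeta^\epsilon)\le-\frac34\|r^\epsilon\|^2+C'\|w^\epsilon\|^2$, and combining with the energy balance gives $\frac{d}{dt}\mathcal{L}^\epsilon\le-(2\underline a-2\delta C')\|w^\epsilon\|^2-\frac{3\delta}{2}\|r^\epsilon\|^2$.

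The step I expect to be the real obstacle is the boundary flux term. Differentiating the pairing requires $\partial_t\zeta^\epsilon=\int_x^1\partial_t r^\epsilon\,dy=w^\epsilon-w^\epsilon(1,t)$, so the naive multiplier $\epsilon^2(w^\epsilon,\zeta^\epsilon)$ would contribute the boundary trace $-\epsilon^2 w^\epsilon(1,t)\langle w^\epsilon\rangle$, which is neither absorbed by the dissipation $\|w^\epsilon\|^2$ nor controlled a priori by $E^\epsilon$. Subtracting the spatial mean $\langle w^\epsilon\rangle$ in the first slot is exactly what removes it, since that term then pairs against $(w^\epsilon-\langle w^\epsilon\rangle,1)=0$, the only price being the harmless product $\int_0^1 aw^\epsilon\,dx\cdot\int_0^1 x\,r^\epsilon\,dx$ above. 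Choosing $\delta>0$ small enough that $2\delta C'\le\underline a$ and $|2\delta\epsilon^2(w^\epsilon-\langle w^\epsilon\rangle,\zeta^\epsilon)|\le\frac12 E^\epsilon$, one obtains $\frac12 E^\epsilon\le\mathcal{L}^\epsilon\le\frac32 E^\epsilon$ and, using $\epsilon\le1$ once more, $\frac{d}{dt}\mathcal{L}^\epsilon\le-\min(\underline a,\frac{3\delta}{2})E^\epsilon\le-\gamma\,\mathcal{L}^\epsilon$ with $\gamma>0$ independent of $\epsilon$. Gr\"onwall's lemma and the equivalence $\mathcal{L}^\epsilon\sim E^\epsilon$ then yield $E^\epsilon(t)\le 3\,e^{-\gamma t}E^\epsilon(0)$; together with the $\epsilon=0$ estimate and time translation this proves the theorem, with $C=3$ and $\gamma$ the smaller of the two rates. (Alternatively one can avoid the functional: integrating the displayed identity in time gives $\int_0^t\|r^\epsilon\|^2\,ds\le CE^\epsilon(0)$ uniformly in $\epsilon$, whence $\int_0^\infty E^\epsilon\,ds\le C'E^\epsilon(0)$, algebraic decay $E^\epsilon(t)\le C'E^\epsilon(0)/t$ by monotonicity, and finally exponential decay by the usual semigroup iteration; the Lyapunov argument is cleaner.)
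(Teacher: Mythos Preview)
Your argument is correct and complete. The reduction to homogeneous data, the separate treatment of $\epsilon=0$ via Poincar\'e, and the Lyapunov functional $\mathcal{L}^\epsilon$ with the mean-corrected cross term $2\delta\epsilon^2(w^\epsilon-\langle w^\epsilon\rangle,\zeta^\epsilon)$ all work as you claim; in particular the boundary flux $-\epsilon^2 w^\epsilon(1,t)\langle w^\epsilon\rangle$ indeed cancels after subtracting the mean, and the remaining terms are controlled exactly as you indicate.

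Your route, however, is quite different from the paper's. The paper does not build an $\epsilon$-adapted Lyapunov functional at all: it rescales time by setting $\tau=t/\epsilon$ and introduces $\pi^\epsilon(\tau)=p^\epsilon(t)$, $\mu^\epsilon(\tau)=\epsilon m^\epsilon(t)$, which converts (\ref{eq:sys1})--(\ref{eq:sys2}) into the standard damped wave system $\partial_\tau\pi^\epsilon+\partial_x\mu^\epsilon=0$, $\partial_\tau\mu^\epsilon+\partial_x\pi^\epsilon+\frac{a}{\epsilon}\mu^\epsilon=0$, for which an existing result (Theorem~3.3 in \cite{EggerKugler15}) already gives decay at rate $c\epsilon$ in $\tau$; undoing the rescaling turns this into the $\epsilon$-independent rate $c$ in $t$. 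The paper's proof is therefore essentially a two-line reduction to a black box. What your approach buys is self-containment: you prove the uniform decay from scratch, and your multiplier $(w^\epsilon-\langle w^\epsilon\rangle,\zeta^\epsilon)$ with the explicit $\epsilon^2$ weighting makes transparent \emph{why} the decay rate does not degenerate as $\epsilon\to0$. The paper's rescaling trick, on the other hand, is shorter and shows that the $\epsilon$-dependence is in some sense artificial---it can be removed by a change of variables, after which one only needs that the cited decay estimate tracks the damping parameter correctly.
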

\begin{proof} \smartqed
Set $\tau = t/\epsilon$ and $\sigma=s/\epsilon$ and define $\pi^\epsilon(\tau) = p^\epsilon(t)$ and $\mu^\epsilon(\tau)=\epsilon m^\epsilon(t)$. Then by elementary calculations, one can see that
\begin{eqnarray}
\partial_t \pi^\epsilon + \partial_x \mu^\epsilon &=& 0  \nonumber\\
\partial_t \mu^\epsilon + \partial_x \pi^\epsilon + \frac{a}{\epsilon} \mu^\epsilon &=& 0. \nonumber
\end{eqnarray}
The exponential convergence for this problem has been established in \cite{EggerKugler15} and a direct application of Theorem~3.3 in \cite{EggerKugler15} yields 
$$
\|\pi^\epsilon(\tau) - \bar \pi\|^2 + \|\mu^\epsilon(\tau) - \bar \mu\|^2 \le C e^{-c \epsilon (\tau-\sigma)} (\|\pi^\epsilon(\sigma) - \bar \pi\|^2 + \|\mu^\epsilon(\sigma) - \bar \mu\|^2).
$$ 
Using $\tau = t/\epsilon$ and $\sigma=s/\epsilon$  and the definition of $\pi^\epsilon$ and $\mu^\epsilon$ then directly yields the estimate for $\epsilon>0$.
The result for $\epsilon = 0$ follows directly but also from the uniformity of those for $\epsilon>0$ and the convergence to the parabolic limit. \qed
\end{proof}

\section{A mixed finite element Runge-Kutta scheme}
\label{sec:3}

For the discretization of problem (\ref{eq:sys1})--(\ref{eq:sys3}), we now consider a mixed finite element method in space and the implicit Euler method in time. More general Galerkin and time-integration schemes could be analyzed in a similar manner. Let $T_h=\{e\}$ denote a uniform mesh of the interval $(0,1)$ into elements $e$ of size $h$ and denote by 
\begin{eqnarray} \nonumber
Q_h = \{ q \in L^2(0,1) : q|_e \in P_0(e)\} 
\quad \mbox{and} \quad 
V_h = \{v \in C[0,1] : v|_e \in P_1(e) \} 
\end{eqnarray}
the spaces of piecewise constant and piecewise linear and continuous functions, respectively.
Furthermore, let $\tau>0$ be the time step size, define $t^k = k \tau$, and denote by $\bar \partial_\tau u(t^k) = \frac{1}{\tau} [u(t^k) - u(t^{k-1})]$ the backward difference quotient. 
We then consider 
\begin{problem} \label{prob:1}
Let $p^\epsilon_{h,\tau}(0)$ and $m^\epsilon_{h,\tau}(0)$ be the $L^2$ projections of the initial data onto the finite element spaces. For $k \ge 1$ find $(p_{h,\tau}^\epsilon(t^k),m_{h,\tau}^\epsilon(t^k)) \in Q_h \times V_h$, such that 
\begin{eqnarray}
(\bar \partial_\tau p_{h,\tau}^\epsilon(t^k), q_h) +  (\partial_x m_{h,\tau}^\epsilon(t^k), q_h) &=& 0 \nonumber \\
\epsilon^2 (\bar \partial_\tau m_{h,\tau}^\epsilon(t^k), v_h) -  (p_{h,\tau}^\epsilon(t^k), \partial_x v_h) + (a  m_{h,\tau}^\epsilon(t^k), v_h) &=& g_0 v_h(0) - g_1 v_h(1) \nonumber
\end{eqnarray}
holds for all test functions $q_h \in Q_h$ and all $v_h \in V_h$.
\end{problem}
Recall that $(\cdot,\cdot)$ denotes the scalar product of $L^2(0,1)$. 
Existence of a unique discrete solution $(p^\epsilon_{h,\tau},m^\epsilon_{h,\tau})$ to Problem~\ref{prob:1} and of a unique solution $(\bar p_h,\bar m_h)$ of the corresponding stationary problem can be deduced from the results in \cite{EggerKugler15}. 
\begin{lemma} \label{lem:3}
For any $\epsilon \ge 0$, Problem~\ref{prob:1} admits a unique solution $(p_{h,\tau}^\epsilon,m_{h,\tau}^\epsilon)$ and
\begin{eqnarray}
\|p^\epsilon_ {h,\tau}(t^k) - \bar p_h\|^2 + \epsilon^2  \|m^\epsilon_{h,\tau}(t^k) - \bar m_h\|^2
+ 2 \underline a \sum_{j=1}^k \tau \|m^\epsilon_{h,\tau}(t^j) - \bar m_h\|^2 \nonumber \\
\le \|p_0 - \bar p_h\|^2 + \epsilon^2  \|m_0 - \bar m_h\|^2 \nonumber
\end{eqnarray} 
for all $k \ge 0$, where $(\bar p_h,\bar m_h) \in Q_h \times V_h$ denotes the unique solution of the corresponding stationary problem. 
For $\epsilon>0$, we additionally have 
\begin{eqnarray}
\|\bar \partial_\tau p^\epsilon_{h,\tau}(t^k)\|^2 + \epsilon^2  \|\bar \partial_\tau m^\epsilon_{h,\tau}(t^k)\|^2
+ 2 \underline a \sum_{j=1}^k \tau \|\bar \partial_\tau m^\epsilon_{h,\tau}(t^j)\|^2 \nonumber \\
\le C (\|\partial_x m_0\|^2 + \frac{1}{\epsilon^2}\|\partial_x p_0 + a m_0\|^2 + \overline a^2 \|m_0\|^2) \nonumber 
\end{eqnarray} 
with constant $C$ that is independent of $\epsilon$ and the discretization parameters $h$ and $\tau$.
\end{lemma}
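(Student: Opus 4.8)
The plan is to mimic, at the discrete level, the two energy estimates proven in Lemma~\ref{lem:2}, exploiting the fact that the mixed finite element scheme is a conforming Galerkin method with a built-in dissipation mechanism. For the first estimate I would subtract the discrete stationary equations from Problem~\ref{prob:1}, so that the differences $r_{h,\tau}^\epsilon = p_{h,\tau}^\epsilon - \bar p_h$ and $w_{h,\tau}^\epsilon = m_{h,\tau}^\epsilon - \bar m_h$ satisfy the scheme with homogeneous boundary data. Testing the first equation with $q_h = r_{h,\tau}^\epsilon(t^k)$ and the second with $v_h = w_{h,\tau}^\epsilon(t^k)$ and adding, the off-diagonal terms $(\partial_x w_{h,\tau}^\epsilon, r_{h,\tau}^\epsilon)$ and $-(r_{h,\tau}^\epsilon, \partial_x w_{h,\tau}^\epsilon)$ cancel exactly (this is the crucial structural property of the mixed discretization — no integration by parts is needed, hence no consistency error), leaving the friction term $(a w_{h,\tau}^\epsilon, w_{h,\tau}^\epsilon) \ge \underline a \|w_{h,\tau}^\epsilon\|^2$ on the one side and the discrete time-derivative terms on the other. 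The only point requiring care here is the discrete analogue of $\frac12 \frac{d}{dt}\|u\|^2 = (\partial_t u, u)$: one uses the elementary inequality $(\bar\partial_\tau u(t^k), u(t^k)) \ge \frac{1}{2\tau}(\|u(t^k)\|^2 - \|u(t^{k-1})\|^2)$, which follows from $2a(a-b) = a^2 - b^2 + (a-b)^2 \ge a^2 - b^2$. Multiplying by $2\tau$, summing over $j = 1,\dots,k$, and telescoping yields the first estimate, with the right-hand side $\|p_0 - \bar p_h\|^2 + \epsilon^2\|m_0 - \bar m_h\|^2$ coming from the fact that $p_{h,\tau}^\epsilon(0), m_{h,\tau}^\epsilon(0)$ are $L^2$ projections and the projection is a contraction in $L^2$.

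For the second estimate I would apply the difference quotient operator $\bar\partial_\tau$ to the equations of Problem~\ref{prob:1}. Since the scheme has constant-in-time boundary data and constant coefficients in time, the pair $(\bar\partial_\tau p_{h,\tau}^\epsilon, \bar\partial_\tau m_{h,\tau}^\epsilon)$ again solves the scheme with homogeneous right-hand side, so the first estimate applies verbatim to it and gives
\begin{eqnarray} \nonumber
\|\bar\partial_\tau p_{h,\tau}^\epsilon(t^k)\|^2 + \epsilon^2 \|\bar\partial_\tau m_{h,\tau}^\epsilon(t^k)\|^2 + 2\underline a \sum_{j=1}^k \tau \|\bar\partial_\tau m_{h,\tau}^\epsilon(t^j)\|^2 \le \|\bar\partial_\tau p_{h,\tau}^\epsilon(t^1)\|^2 + \epsilon^2 \|\bar\partial_\tau m_{h,\tau}^\epsilon(t^1)\|^2.
\end{eqnarray}
It then remains to bound the initial difference quotients by the data. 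This is the main obstacle: unlike the continuous case, where one simply reads off $\partial_t p^\epsilon(0)$ and $\partial_t m^\epsilon(0)$ from the differential equations, here one must estimate $\bar\partial_\tau p_{h,\tau}^\epsilon(t^1) = \frac1\tau(p_{h,\tau}^\epsilon(t^1) - p_{h,\tau}^\epsilon(0))$ using the first time step of the scheme. I would test the $k=1$ equations with $q_h = \bar\partial_\tau p_{h,\tau}^\epsilon(t^1)$ and $v_h = \bar\partial_\tau m_{h,\tau}^\epsilon(t^1)$; rewriting $p_{h,\tau}^\epsilon(t^1) = p_{h,\tau}^\epsilon(0) + \tau\,\bar\partial_\tau p_{h,\tau}^\epsilon(t^1)$ and similarly for $m$, the terms involving $\tau\,\bar\partial_\tau$ combine into a coercive contribution, while the leftover terms carry $p_{h,\tau}^\epsilon(0), m_{h,\tau}^\epsilon(0)$ and the boundary data. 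After a Cauchy–Schwarz / Young step one arrives at a bound of the form $\|\bar\partial_\tau p_{h,\tau}^\epsilon(t^1)\|^2 + \epsilon^2\|\bar\partial_\tau m_{h,\tau}^\epsilon(t^1)\|^2 \lesssim \|\partial_x m_{h,\tau}^\epsilon(0)\|^2 + \frac{1}{\epsilon^2}\| \text{(discrete residual of the second equation at }t=0)\|^2$.

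The remaining technical point — and the reason the constant $C$ and the extra term $\overline a^2\|m_0\|^2$ appear in the statement — is that $p_{h,\tau}^\epsilon(0)$ and $m_{h,\tau}^\epsilon(0)$ are the $L^2$ projections $P_h p_0$, $P_h m_0$, not $p_0, m_0$ themselves, so $\partial_x m_{h,\tau}^\epsilon(0)$ is not simply $\partial_x m_0$. I would use the standard stability of the $L^2$ projection onto $V_h$ in $H^1$ on quasi-uniform meshes, $\|\partial_x P_h m_0\| \le C\|\partial_x m_0\|$, to control the first term, and for the discrete residual of the second equation — which after testing reads $-(P_h p_0, \partial_x v_h) + (a P_h m_0, v_h) - g_0 v_h(0) + g_1 v_h(1)$ — I would compare it to the continuous residual $(\partial_x p_0 + a m_0, v_h)$, which vanishes only up to the projection errors; bounding $\|p_0 - P_h p_0\|$ and $\|m_0 - P_h m_0\|$ by the data (using again $L^2$-stability, or in the worst case simply $\|P_h m_0\| \le \|m_0\|$, producing the $\overline a^2\|m_0\|^2$ term from the friction contribution) closes the argument. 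The general (non-smooth-in-time) case, as in Lemma~\ref{lem:2}, follows since everything here is already fully discrete and no differentiability in time beyond the difference quotient is used. Uniformity of $C$ in $h$, $\tau$, and $\epsilon$ is then manifest from the derivation, since each step used only the coercivity constant $\underline a$, the bound $\overline a$, and mesh-independent projection stability.
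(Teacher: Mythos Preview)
Your proposal is correct and follows essentially the same route as the paper's proof: reduce to homogeneous data (the paper sets $g_0=g_1=0$ w.l.o.g., you subtract $(\bar p_h,\bar m_h)$), use the implicit-Euler inequality $(\bar\partial_\tau u^k,u^k)\ge\frac{1}{2\tau}(\|u^k\|^2-\|u^{k-1}\|^2)$ and telescoping for the first estimate, apply it to the difference quotients for the second, and bound $\|\bar\partial_\tau p^1\|^2+\epsilon^2\|\bar\partial_\tau m^1\|^2$ by testing the $k=1$ system with the first increment and invoking the $H^1$-stability of the $L^2$ projection. The paper exploits the key structural fact $\partial_x V_h\subset Q_h$ to replace $p^0=P_{Q_h}p_0$ by $p_0$ in $(\partial_x(m^1-m^0),p^0)$, which is exactly what makes your ``discrete residual'' comparison go through without a projection error in the pressure term; the remaining $\overline a^2\|m^0-m_0\|^2$ (bounded by $\overline a^2\|m_0\|^2$) arises precisely as you anticipated.
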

\begin{proof} \smartqed
Without loss of generality, we may set $g_0=g_1=0$ and hence $\bar p_h \equiv \bar m_h \equiv 0$. 
For ease of notation, let us abbreviate $p^{k} := p_{h,\tau}^\epsilon(t^k)$ and $m^k := m_{h,\tau}^\epsilon(t^k)$. 
Then by elementary calculations, one can verify that
\begin{eqnarray}
\|p^{k}\|^2 + \epsilon^2 \|m^k\|^2 &+& \|p^k-p^{k-1}\|^2 + \epsilon^2 \|m^k-m^{k-1}\|^2 \nonumber\\
&=& \|p^{k-1}\|^2 + \epsilon^2 \|m^{k-1}\|^2 + 2\tau [(\bar \partial_\tau p^k,p^k ) + \epsilon^2 (\bar \partial_\tau m^k, m^k)]. \nonumber
\end{eqnarray}
Using the discrete problem and the lower bounds for the parameter, we thus obtain 
\begin{eqnarray} \nonumber
\|p^{k}\|^2 + \epsilon^2 \|m^k\|^2
\le \|p^{k-1}\|^2 + \epsilon^2 \|m^{k-1}\|^2 - 2 \underline a \tau \|m^k\|^2. 
\end{eqnarray}
The first estimate now follows by recursion and by noting that $\|p^0\| \le \|p_0\|$ and $\|m^0\| \le \|m_0\|$, since the initial iterates were defined as $L^2$ orthogonal projections of the initial values onto the respective subspaces. 
By linearity of the problem, one can then deduce in a similar manner that 
\begin{eqnarray} \nonumber
\|\bar \partial_\tau p^{k}\|^2 + \epsilon^2 \|\bar \partial_\tau m^k\|^2
+ 2 \underline a \sum_{j=2}^k \tau \|\bar \partial_\tau m^j\|^2 
\le \|\bar \partial_\tau p^{1}\|^2 + \epsilon^2 \|\bar \partial_\tau m^1\|^2.
\end{eqnarray}
Using the discrete problem for $k=1$, we further get 
\begin{eqnarray}
&&\tau (\|\bar \partial_\tau p^1\|^2 + \epsilon^2 \|\bar \partial_\tau m^1\|^2) \nonumber \\
&&= -(\partial_x m^1, p^1 - p^0) + (p^1, \partial_x m^1 - \partial_x m^0) - (a m^1, m^1-m^0) \nonumber \\
&&\le -(m^1-m^0, \partial_x p_0 + a m^0) - (p^1 - p^0, \partial_x m^0) \nonumber \\
&&\le \tau \|\bar \partial_\tau m^1 \| \|\partial_x p_0 + a m^0\| + \tau \|\bar \partial_\tau p^1\| \|\partial_x m^0\| . \nonumber
\end{eqnarray}
Using Young's inequality, the bounds for the parameter $a$, and the stability of the $L^2$ projection in the $H^1$ norm, we may conclude that
\begin{eqnarray}
\|\bar \partial_\tau p^1\|^2 + \epsilon^2 \|\bar \partial_\tau m^1\|^2 
\le C' \|\partial_x m_0\|^2 + \frac{1}{\epsilon^2} (2\|\partial_x p_0 + a m_0\|^2 + 2 \overline a^2 \|m^0 - m_0\|^2), \nonumber
\end{eqnarray}
which together with the energy estimate from above completes the proof. \qed
\end{proof}

Similarly as on the continuous level, a combination of the previous estimates now immediately allows to show convergence of the solutions $(p^\epsilon_{h,\tau},m^\epsilon_{h,\tau})$ of the discrete hyperbolic problem to that of the discrete parabolic problem when $\epsilon \to 0$. 
\begin{theorem} \label{thm:r3}
Let $(p^\epsilon_{h,\tau},m^\epsilon_{h,\tau})$ and $(p^0_{h,\tau},m^0_{h,\tau})$ denote solutions of Problem~\ref{prob:1} for $\epsilon>0$ and $\epsilon=0$, respectively. Further assume that $p^\epsilon_{h,\tau}(0)=p^0_{h,\tau}(0)$. Then 
\begin{eqnarray} 
\|p_{h,\tau}^\epsilon(t^k) - p_{h,\tau}^0(t^k)\|^2 + 2 \underline a \sum_{j=1}^k \tau \|m^\epsilon_{h,\tau}(t^j) - m^0_{h,\tau}(t^j)\|^2 \nonumber \\
\le C \epsilon^4 (\|\partial_x m_0\|^2 + \frac{1}{\epsilon^2} \|\partial_x p_0 + a m_0\|^2 + \frac{\overline a^2}{\epsilon^2} \|m_0\|^2) \nonumber
\end{eqnarray}
with constant $C$ independent of $\epsilon$ and of the discretization parameters $h$ and $\tau$.
\end{theorem}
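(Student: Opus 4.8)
The plan is to carry the proof of Theorem~\ref{thm:r1} over to the discrete setting almost verbatim, with time derivatives replaced by backward difference quotients, the integral $\int_0^t$ replaced by the Riemann sum $\sum_{j=1}^k\tau$, and the second estimate of Lemma~\ref{lem:3} used in place of the second estimate of Lemma~\ref{lem:2}. Introduce the differences $r^k := p^\epsilon_{h,\tau}(t^k)-p^0_{h,\tau}(t^k)\in Q_h$ and $w^k := m^\epsilon_{h,\tau}(t^k)-m^0_{h,\tau}(t^k)\in V_h$. Subtracting the two instances of Problem~\ref{prob:1} corresponding to the parameter $\epsilon>0$ and to $\epsilon=0$, the boundary contributions cancel and only the term $\epsilon^2(\bar\partial_\tau m^\epsilon_{h,\tau}(t^k),v_h)$ remains on top of the limit system, so that for all $k\ge1$ and all $q_h\in Q_h$, $v_h\in V_h$
\begin{align*}
(\bar\partial_\tau r^k,q_h) + (\partial_x w^k,q_h) &= 0, \\
-(r^k,\partial_x v_h) + (a\,w^k,v_h) &= -\epsilon^2(\bar\partial_\tau m^\epsilon_{h,\tau}(t^k),v_h).
\end{align*}

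Next I would test the first equation with $q_h=r^k$ and the second with $v_h=w^k$ and add; the two mixed terms cancel by symmetry of the $L^2$ scalar product, leaving $(\bar\partial_\tau r^k,r^k)+(a\,w^k,w^k)=-\epsilon^2(\bar\partial_\tau m^\epsilon_{h,\tau}(t^k),w^k)$. Using the elementary identity $2\tau(\bar\partial_\tau r^k,r^k)=\|r^k\|^2-\|r^{k-1}\|^2+\|r^k-r^{k-1}\|^2\ge\|r^k\|^2-\|r^{k-1}\|^2$, the lower bound $a\ge\underline a$, Young's inequality on the right-hand side, and summation over $j=1,\dots,k$ together with $r^0=0$ (which holds by the hypothesis $p^\epsilon_{h,\tau}(0)=p^0_{h,\tau}(0)$), this yields
\begin{equation*}
\|r^k\|^2 + \underline a\sum_{j=1}^k\tau\,\|w^j\|^2 \;\le\; \frac{\epsilon^4}{\underline a}\sum_{j=1}^k\tau\,\|\bar\partial_\tau m^\epsilon_{h,\tau}(t^j)\|^2 .
\end{equation*}
The factor $2\underline a$ in the statement is then obtained by bounding $\underline a\sum_j\tau\|w^j\|^2$ once more by the right-hand side and absorbing the extra factor into $C$.

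It remains to control $\sum_{j=1}^k\tau\,\|\bar\partial_\tau m^\epsilon_{h,\tau}(t^j)\|^2$, and this is exactly what the second estimate of Lemma~\ref{lem:3} provides: it bounds this sum by a constant multiple of $\|\partial_x m_0\|^2+\epsilon^{-2}\|\partial_x p_0+a m_0\|^2+\epsilon^{-2}\overline a^2\|m_0\|^2$, with a constant independent of $h$, $\tau$ and $\epsilon$. Substituting and collecting constants gives the asserted bound. There is no genuine obstacle here — the argument is a line-by-line discrete transcription of Theorem~\ref{thm:r1}; the only point that deserves attention is the bookkeeping of the powers of $\epsilon$: the source term in the difference system is of size $\epsilon^2$, which produces the $\epsilon^4$ prefactor after the energy estimate, while the a priori bound on $\bar\partial_\tau m^\epsilon_{h,\tau}$ degenerates like $\epsilon^{-2}$, so that the two effects combine to the net $\epsilon^2$ rate claimed in (R3) — and this is precisely where the $h$- and $\tau$-uniformity of the difference-quotient bound in Lemma~\ref{lem:3} is essential.
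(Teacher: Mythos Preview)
Your argument is correct and matches the paper's proof essentially line by line: both derive the same difference system for $(r^k,w^k)$, test with $q_h=r^k$ and $v_h=w^k$, use the implicit-Euler energy identity together with Young's inequality to absorb half of the dissipation, sum, use $r^0=0$, and close with the second estimate of Lemma~\ref{lem:3}. The only cosmetic discrepancy is that Lemma~\ref{lem:3} actually gives the last term as $\overline a^2\|m_0\|^2$ rather than $\epsilon^{-2}\overline a^2\|m_0\|^2$, but since the latter dominates the former this only strengthens your conclusion.
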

\begin{proof} \smartqed
Define $r^k = p_{h,\tau}^\epsilon(t^k) - p_{h,\tau}^0(t^k)$ and $w^k = m_{h,\tau}^\epsilon(t^k) - w^0_{h,\tau}(t^k)$. 
Then by linearity of the discrete problem, one can see that 
\begin{eqnarray}
(\bar \partial_\tau r^k, q_h) + (\partial_x w^k, q_h) &=&0 \nonumber \\
- (r^k, \partial_x v_h) + (a w^k, v_h) &=& -\epsilon^2 (\bar \partial_\tau m^\epsilon_{h,\tau}(t^k), v_h)  \nonumber 
\end{eqnarray}
for all $q_h \in Q_h$ and $v_h \in V_h$ and for all $k \ge 0$. 
Testing with $q_h = w^k$ and $v_h = m^k$ and proceeding similarly as in the previous lemmas leads to 
the energy estimate
\begin{eqnarray}
\|r^k\|^2 + 2 \underline a \sum_{j=1}^k \tau \|w^k\|^2  \nonumber 
\le \|r^0\|^2 + \epsilon^2 \sum_{j=1}^k \tau \|\bar \partial_\tau m^\epsilon_{h,\tau}(t^j)\| \|w^k\| \nonumber \\
\le \|r^0\|^2 + \underline a \sum_{j=1}^k \tau \|w^k\|^2 + \frac{\epsilon^4}{\underline a} \sum_{j=1}^k \tau \|\bar \partial_\tau m^\epsilon_{h,\tau}(t^j)\|^2.\nonumber 
\end{eqnarray}
The assertion now follows by noting that $r^0 \equiv 0$ and application of the second estimate of the previous lemma to estimate the last term in this expression. \qed
\end{proof}

Similarly as on the continuous level, one can again prove uniform exponential convergence of discrete solutions to steady states.
\begin{theorem} \label{thm:r4}
 Let $(p^\epsilon_{h,\tau},m^\epsilon_{h,\tau})$ denote a solution of Problem~\ref{prob:1} and let $(\bar p_h,\bar m_h)$ let be the unique solution of the 
 corresponding stationary problem. Then
 \begin{eqnarray}
  &\|p_{h,\tau}^\epsilon(t^k)-\bar p_h\|^2 &+ \epsilon^2\|m_{h,\tau}^\epsilon(t^k)-\bar m_h\|^2 \nonumber\\
 &&\le C e^{-\gamma(k-j)\tau}\|p_{h,\tau}^\epsilon(t^j)-\bar p_h\|^2 + \epsilon^2\|m_{h,\tau}^\epsilon(t^j)-\bar m_h\|^2\nonumber
 \end{eqnarray}
for all $0\le j\le k$ with constants $C,\gamma>0$ that are independent of $\epsilon$, $h$, and $\tau$.
\end{theorem}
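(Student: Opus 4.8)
The plan is to imitate the proof of Theorem~\ref{thm:r2} and to reduce the claim to a discrete exponential stability estimate for Problem~\ref{prob:1} that is already available from \cite{EggerKugler15}. As in the continuous case, we first use linearity of the scheme to assume $g_0=g_1=0$, so that $\bar p_h\equiv\bar m_h\equiv 0$ and it remains to prove decay of $E^k:=\|p^\epsilon_{h,\tau}(t^k)\|^2+\epsilon^2\|m^\epsilon_{h,\tau}(t^k)\|^2$. For $\epsilon>0$ I would introduce the rescaled time step $\tilde\tau=\tau/\epsilon$ and grid $\tilde t^k=k\tilde\tau=t^k/\epsilon$, and set $\pi^k:=p^\epsilon_{h,\tau}(t^k)$ and $\mu^k:=\epsilon\,m^\epsilon_{h,\tau}(t^k)$. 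Using $\bar\partial_{\tilde\tau}\pi^k=\epsilon\,\bar\partial_\tau p^\epsilon_{h,\tau}(t^k)$ and $\bar\partial_{\tilde\tau}\mu^k=\epsilon^2\,\bar\partial_\tau m^\epsilon_{h,\tau}(t^k)$, a short computation with the difference quotients shows that $(\pi^k,\mu^k)\in Q_h\times V_h$ solves Problem~\ref{prob:1} with $\epsilon$ replaced by $1$, the friction coefficient $a$ replaced by $a/\epsilon$, and time step $\tau$ replaced by $\tilde\tau$; the boundary data and the finite element spaces are unchanged, and $\underline a/\epsilon\le a/\epsilon\le\overline a/\epsilon$.

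Next I would apply the discrete exponential stability estimate for the mixed finite element Euler scheme from \cite{EggerKugler15} (the discrete counterpart of Theorem~3.3 there) to this rescaled problem. As in the continuous argument, this provides constants $C$ and $c>0$, independent of $h$ and $\tilde\tau$, such that
\[
\|\pi^k\|^2+\|\mu^k\|^2\le C\,e^{-c\epsilon(\tilde t^k-\tilde t^j)}\bigl(\|\pi^j\|^2+\|\mu^j\|^2\bigr),\qquad 0\le j\le k,
\]
where the factor $\epsilon$ in the exponent is precisely the decay rate produced by a friction coefficient of size $\sim 1/\epsilon$. Undoing the rescaling, i.e.\ using $\tilde t^k-\tilde t^j=(t^k-t^j)/\epsilon=(k-j)\tau/\epsilon$ and $\|\pi^k\|^2+\|\mu^k\|^2=\|p^\epsilon_{h,\tau}(t^k)\|^2+\epsilon^2\|m^\epsilon_{h,\tau}(t^k)\|^2=E^k$, yields exactly the asserted estimate with $\gamma=c$, uniformly in $\epsilon$, $h$, and $\tau$. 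The case $\epsilon=0$ then follows either directly from the same (simpler) energy argument applied to the discrete parabolic problem, or, as in Theorem~\ref{thm:r2}, by passing to the limit $\epsilon\to0$ using the $\epsilon$-uniformity just obtained together with the convergence result of Theorem~\ref{thm:r3}.

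The step I expect to need the most care is checking that the discrete stability estimate from \cite{EggerKugler15} is genuinely uniform in the time step \emph{and} produces a rate scaling like $c\epsilon$ when the friction coefficient is of order $1/\epsilon$; equivalently, that its constants depend on the friction bounds only through the combination that survives the rescaling. For the continuous problem this is exactly what Theorem~3.3 of \cite{EggerKugler15} provides, and since the present scheme is variational and dissipative, the corresponding discrete statement should hold with the same structure — but this is the point where one must be precise about the parameter dependence, because all the uniformity of the final result rests on it.

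An alternative, more self-contained route would combine the discrete energy dissipation of Lemma~\ref{lem:3} with a discrete inf-sup / Poincaré bound of the form $\|p^\epsilon_{h,\tau}(t^k)-\bar p_h\|\le C\bigl(\epsilon^2\|\bar\partial_\tau m^\epsilon_{h,\tau}(t^k)\|+\overline a\,\|m^\epsilon_{h,\tau}(t^k)-\bar m_h\|\bigr)$, obtained by testing the discrete momentum equation with a $v_h\in V_h$ satisfying $\partial_x v_h=p^\epsilon_{h,\tau}(t^k)-\bar p_h$ and $\|v_h\|_{H^1}\le C\|p^\epsilon_{h,\tau}(t^k)-\bar p_h\|$. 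One then controls the ``potential'' part of $E^k$ by its dissipated ``flux'' part over a fixed window of time steps, using the bound on $\bar\partial_\tau m^\epsilon_{h,\tau}$ from Lemma~\ref{lem:3}, and iterates to obtain geometric decay. This avoids citing the discrete version of Theorem~3.3 but requires the additional time-derivative estimate of Lemma~\ref{lem:3} and a careful bookkeeping of the window length.
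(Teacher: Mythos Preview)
Your proposal is correct and follows essentially the same route as the paper: reduce to homogeneous data by linearity, rescale the discrete problem via $\tilde\tau=\tau/\epsilon$, $\pi^k=p^\epsilon_{h,\tau}(t^k)$, $\mu^k=\epsilon m^\epsilon_{h,\tau}(t^k)$, invoke the discrete exponential stability result from \cite{EggerKugler15} (the paper cites Theorem~7.4 there rather than a ``discrete counterpart of Theorem~3.3''), undo the rescaling, and handle $\epsilon=0$ via uniformity and Theorem~\ref{thm:r3}. Your caveat about verifying that the cited discrete estimate yields a rate scaling like $c\epsilon$ for friction of order $1/\epsilon$ is well placed and is exactly the point the paper leaves implicit; your alternative self-contained argument via the discrete inf-sup bound is additional and not pursued in the paper.
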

\begin{proof} \smartqed
Using a rescaling like in the proof of Theorem~\ref{thm:r2}, the result for $\epsilon>0$ can be deduced directly from Theorem~7.4 in \cite{EggerKugler15}. The estimate for $\epsilon=0$ follows from the uniformity of the estimates and convergence to the parabolic limit.\qed
\end{proof}

\section{Extension to pipe networks}
\label{sec:4}

The results of the previous sections can be extended to the following class of hyperbolic problems on networks: Let $\mathcal{G}=(\mathcal{V},\mathcal{E})$ be a finite directed graph representing the 
topology of the network. On every single pipe $e$, the dynamics shall again be described by the 
linear damped hyperbolic system
\begin{eqnarray}
\partial_t p^\epsilon_e + \partial_x m^\epsilon_e &=& 0 \label{eq:net1}\\
\epsilon^2 \partial_t m^\epsilon_e + \partial_x p^\epsilon_e + a_e m^\epsilon_e &=& 0. \label{eq:net2}
\end{eqnarray}
At any junction $v$ of several pipes $e \in \mathcal{E}(v)$ of the network, we require that 
\begin{eqnarray}
\sum_{e \in \mathcal{E}(v)} n_e(v) m^\epsilon_e(v) = 0 \label{eq:net3} \\
p^\epsilon_e(v) = p_v \qquad \forall e \in \mathcal{E}(v). \label{eq:net4}
\end{eqnarray}
Here $n_e(v)$ takes the value minus or plus one, depending on whether the pipe $e$ start or ends at the junction $v$.
At the boundary vertices $v$ of the network, we require
\begin{eqnarray}
p^\epsilon_e(v)=g_v.  \label{eq:net5}
\end{eqnarray}
Using the arguments developed in \cite{EggerKugler16}, all results stated in Theorem~\ref{thm:r1}--\ref{thm:r4}
hold verbatim also for the system (\ref{eq:net1})--(\ref{eq:net5}). Details are left to the interested reader.

\section{Numerical validation}
\label{sec:5}
We now illustrate our theoretical results by considering in detail a particular model problem. 
For constant damping parameter $a\equiv1$, initial data $p_0=sin(\pi x)$, $m_0 \equiv 0$, and boundary values $g_0=g_1\equiv0$, the solution of problem (\ref{eq:sys1})--(\ref{eq:sys3}) is given by
\begin{eqnarray}
p^\epsilon(x,t) &=&\left(\frac{2\pi^2\epsilon^2}{1-s(\epsilon)}\frac{1}{s(\epsilon)}\exp{\left(-\frac{1}{2\epsilon^2}(1-s(\epsilon))t\right)}\right. \nonumber\\
		  &&- \left.\frac{2\pi^2\epsilon^2}{1+s(\epsilon)}\frac{1}{s(\epsilon)}\exp{\left(-\frac{1}{2\epsilon^2}(1+s(\epsilon))t\right)}\right)\sin(\pi x)\nonumber
\end{eqnarray}
and 
\begin{eqnarray}
 m^\epsilon(x,t) &=& \left(\frac{\pi}{s(\epsilon)}\exp{\left(-\frac{1}{2\epsilon^2}(1-s(\epsilon))t\right)}\right.\nonumber\\
		  &&-\left.\frac{\pi}{s(\epsilon)}\exp{\left(-\frac{1}{2\epsilon^2}(1+s(\epsilon))t\right)}\right) \cos(\pi x) \nonumber.
\end{eqnarray}
with parameter $s(\epsilon)=\sqrt{1-4\pi^2\epsilon^2}$. 
By Taylor expansion w.r.t. $\epsilon$, we deduce that
\begin{eqnarray}
 p^\epsilon(x,t) &=& \left((1+\mathcal{O}(\epsilon^2))\exp{\biggl((-\pi^2-\mathcal{O}(\epsilon^2))t\biggr)}\right.\nonumber\\
		  &&-\left.\mathcal{O}(\epsilon^2)\exp{\left((-\frac{1}{\epsilon^2}+\mathcal{O}(1))t\right)}\right)\sin(\pi x)\nonumber
\end{eqnarray}
and
\begin{eqnarray}
 m^\epsilon(x,t) &=& \left((\pi + \mathcal{O}(\epsilon^2))\exp{\biggl((-\pi^2-\mathcal{O}(\epsilon^2))t\biggr)}\right.\nonumber\\
	      &&-\left.(\pi + \mathcal{O}(\epsilon^2))\exp{\left((-\frac{1}{\epsilon^2}+\mathcal{O}(1))t)\right)}\right)\cos(\pi x). \nonumber
\end{eqnarray}
For $\epsilon=0$, we simply obtain $p^0(x,t)=e^{-\pi^2 t} \sin(\pi x)$ and $m^0(x,t)=\pi e^{-\pi^2 t} \cos(\pi x)$
and the steady state for this problem is given by $\bar p,\bar m\equiv0$. 

From the explicit solution formulas, one can then immediately see that exponential convergence towards the steady state takes place  with $t \to \infty$ for all $0 \le \epsilon \le 1$ with a rate that is independent of $\epsilon$ which was the assertion of Theorem \ref{thm:r2}.
In Table~\ref{table:r4}, we depict numerical results obtained with the numerical scheme discussed in Section~\ref{sec:3}. 
\begin{table}[ht!]
\renewcommand{\arraystretch}{1.2}
\setlength\tabcolsep{0.5em}
\begin{center}
\begin{tabular}{c||c|c|c|c|c|c}
$t\backslash \epsilon$  & 1/4 & 1/8 & 1/16 & 1/32 & 1/64 & 1/128\\
\hline
\hline
     0.0 & 5.00e-01 & 5.00e-01 & 5.00e-01 & 5.00e-01 & 5.00e-01 & 5.00e-01 \\ \hline
     0.1 & 2.72e-01 & 9.09e-02 & 7.26e-02 & 7.02e-02 & 6.96e-02 & 6.95e-02 \\ \hline
     0.5 & 3.56e-04 & 5.35e-06 & 1.94e-05 & 2.42e-05 & 2.54e-05 & 2.57e-05 \\ \hline
     1.0 & 8.51e-08 & 2.71e-11 & 6.64e-10 & 1.13e-09 & 1.28e-09 & 1.32e-09  \\ \hline
\hline
$\gamma$ & 15.59    & 23.64    & 20.44    & 19.90    & 19.78    & 19.75

\end{tabular} 
\smallskip
\caption{Distance $\|p_{h,\tau}^\epsilon(t) - \bar p_h\|^2 + \epsilon^2 \|m_{h,\tau}^\epsilon - \bar m_h\|^2 \le C e^{-\gamma t}$ of the numerical solution to the discrete steady state for different values of $\epsilon$ and times $t=0,0.1,0.5,1.0$ and estimated exponential convergence rate $\gamma$. Discretization parameters were set to $h=0.01$ and $\tau=10^{-5}$. }
 \label{table:r4}
\end{center}
\end{table}
As predicted by Theorem~\ref{thm:r4}, the exponential convergence towards steady state with $t \to \infty$ is uniform in $\epsilon$ also for the discrete schemes. Mesh independence of 
the exponential decay rate was already demonstrated in \cite{EggerKugler16}. 

Let us next have a closer look on the convergence to the parabolic limit. 
Using the analytical solution formulas and Taylor expansion w.r.t. $\epsilon$, one can deduce that 
\begin{eqnarray}
 p^\epsilon-p^0 &=& \left(\mathcal{O}(\epsilon^2)(t+1)\exp{\biggl(-\pi^2t+\mathcal{O}(1)t\biggr)}\right.\nonumber\\ 
		&&+ \left.\mathcal{O}(\epsilon^2)\exp{\left(-\frac{1}{\epsilon^2}t+\mathcal{O}(1)t\right)}\right)\sin(\pi x)\nonumber
\end{eqnarray}
and
\begin{eqnarray}
 m^\epsilon-m^0 &=& \left(\mathcal{O}(\epsilon^2)(t+1)\exp{\biggl(-\pi^2t+\mathcal{O}(1)t\biggr)}\right.\nonumber\\
		&&-\left.(\pi+\mathcal{O}(\epsilon^2))\exp{\left(-\frac{1}{\epsilon^2}t+\mathcal{O}(1)t\right)}\right)\cos(\pi x).\nonumber
\end{eqnarray}
This shows that $\|p^\epsilon-p^0\|^2 = \mathcal{O}(\epsilon^4)$ and $\int_0^t\| m^\epsilon-m^0\|^2 =\mathcal{O}(\epsilon^2)$ which yields exactly the asymptotic behavior predicted in Theorem~\ref{thm:r1}.
In Table~\ref{table:r3,1}, we display the corresponding results obtained with the proposed discretization scheme. 
\begin{table}[ht!]
\renewcommand{\arraystretch}{1.2}
\setlength\tabcolsep{0.5em}
\begin{center}
\begin{tabular}{c||c|c|c|c|c|c||c}
$t^k\backslash \epsilon$ & 1/4 & 1/8 & 1/16 & 1/32 & 1/64 & 1/128 & $\alpha$ \\
\hline
\hline
0.1  & 9.81e-02 & 3.47e-02 & 9.41e-03 & 2.38e-03 & 5.89e-04 & 1.39e-04 & 1.87 \\ \hline
0.5  & 1.18e-01 & 3.58e-02 & 9.44e-03 & 2.39e-03 & 5.89e-04 & 1.39e-04 & 1.93 \\ \hline
1.0  & 1.18e-01 & 3.58e-02 & 9.44e-03 & 2.39e-03 & 5.89e-04 & 1.39e-04 & 1.93 \\
\end{tabular}
\smallskip
\caption{Error $\|p_{h,\tau}^\epsilon(t^k) - p_{h,\tau}^0(t^k)\|^2 + \sum_{j=1}^k \underline a \|m_{h,\tau}^\epsilon(t^j) - m_{h,\tau}^0(t^j)\|^2 = O(\epsilon^\alpha)$ between the discrete approximations for the hyperbolic problem and the parabolic limit problem for different values of $\epsilon$ and time steps $t^k$ and observed convergence rate $\alpha$. Discretization with $h=0.01$ and $\tau=10^{-5}$.}
 \label{table:r3,1}
\end{center}
\end{table}
Also here we can exactly observe the convergence rate predicted by Theorem~\ref{thm:r3}. 
Note that the second term in the error measure is strictly increasing w.r.t. time, 
which together with the exponential convergence to steady states explains that the error is almost independent of $t$ here. 

In Table~\ref{table:r3,2}, we report about further numerical tests to illustrate the independence of the results on the discretization parameters. 
\begin{table}[ht!]
\renewcommand{\arraystretch}{1.2}
\setlength\tabcolsep{0.5em}
\begin{center}
\begin{tabular}{c||c|c|c|c|c|c||c}
$\epsilon$ & 1/4 & 1/8 & 1/16 & 1/32 & 1/64 & 1/128 & $\alpha$\\
\hline
\hline
$h=0.010,\tau=10^{-5}$  & 1.18e-01 & 3.58e-02 & 9.44e-03 & 2.39e-03 & 5.89e-04 & 1.39e-04 & 1.93 \\ \hline
$h=0.002,\tau=10^{-5}$  & 1.18e-01 & 3.58e-02 & 9.44e-03 & 2.39e-03 & 5.89e-04 & 1.39e-04 & 1.99 \\ \hline
$h=0.010,\tau=10^{-6}$  & 1.18e-01 & 3.58e-02 & 9.46e-03 & 2.40e-03 & 6.00e-04 & 1.49e-04 & 1.90 \\ \hline
$h=0.002,\tau=10^{-6}$  & 1.18e-01 & 3.58e-02 & 9.46e-03 & 2.40e-03 & 6.00e-04 & 1.49e-04 & 1.90 \\
\end{tabular}
\smallskip
\caption{Error $\|p_{h,\tau}^\epsilon(t^k) - p_{h,\tau}^0(t^k)\|^2 + \sum_{j=1}^k \underline a \|m_{h,\tau}^\epsilon(t^j) - m_{h,\tau}^0(t^j)\|^2 = O(\epsilon^\alpha)$ between the discrete approximations for the hyperbolic problem and the parabolic limit problem for time $t^k=1$ and different values of $\epsilon$ and the discretization parameters $h$ and $\tau$.}
 \label{table:r3,2}
\end{center}
\end{table}
Again, the observations are in perfect agreement with the theoretical predictions made in Theorem~\ref{thm:r3}. 

Let us finally note that the previous formulas reveal that the error 
between the solutions of the hyperbolic and the parabolic problem actually behaves like 
\begin{eqnarray} \nonumber
\|p^\epsilon(t) - p^0(t)\|^2 + \|u^\epsilon(t) - u^0(t)\|^2 = O(\epsilon^4) \qquad  \mbox{for } t \gg \epsilon.
\end{eqnarray}
This shows that the estimate of Theorem~\ref{thm:r3} is dominated by the error in the mass flux within the initial layer $ 0 \le t  \preceq \epsilon$ which again resembles the fact that the 
second initial condition gets superfluous in the parabolic limit. 
This behavior can also be observed for the numerical approximations obtained with the method discussed in Section~\ref{sec:3}. A theoretical explanation of this fact would require a refined analysis which is left for future research.

\begin{acknowledgement}
The authors would like to gratefully acknowledge financial support by the German Research Foundation (DFG) via grants IRTG~1529, GSC~233, and TRR~154.
\end{acknowledgement}

\end{document}